\def \Z {\mathbb Z}
\def \R {\mathbb R}
\def \ind{1\!\!1}
\def\wh{\widehat}
\def\da{\downarrow}
\def\ua{\uparrow}
\def\phi{\varphi}
\def\be{\begin{equation}}
\def\ee{\end{equation}}
\def\bea{\begin{eqnarray}}
\def\eea{\end{eqnarray}}
\def\boP{\mathbf{P}}
\def\cL{\mathcal{L}}
\newtheorem {lemma}{Lemma}
\newtheorem {proposition}{Proposition}
\newtheorem* {theorem*}{Theorem}
\newtheorem* {thm*}{Theorem}
\newtheorem* {lemma*}{Lemma}
\newtheorem* {corollary*}{Corollary}
\newtheorem* {prop*}{Proposition}
\newtheorem* {definition*}{Definition}
\newtheorem* {remark*}{Remark}
\renewcommand{\d}{\,\mathrm d}
\renewcommand{\P}{\mathbb P}
\newcommand{\condP}[2]{\P\left(#1\bigm|#2\right)}
\DeclareMathOperator*{\BES}{BES}
\title{Skorohod-reflection \\ of Brownian Paths and $\BES^3$}
\author{
{\sc B\'alint T\'oth} and {\sc B\'alint Vet\H o}
\\[5pt]
Institute of Mathematics
\\
Technical University of Budapest (BME)
}
\begin{document}

\maketitle

\vskip1cm
\centerline
{\large\it
Dedicated to S\'andor Cs\"org\H o on the occasion of his 60th birthday.}
\vskip1cm

\begin{abstract}
Let $B(t)$, $X(t)$ and $Y(t)$ be independent standard 1d Borwnian motions.
Define $X^+(t)$ and $Y^-(t)$ as the trajectories of the processes $X(t)$ and
$Y(t)$ pushed upwards and, respectively, downwards by $B(t)$, according to
Skorohod-reflection. In the recent paper \cite{warren}, Jon Warren
proves inter alia that $Z(t):= X^+(t)-Y^-(t)$ is a three-dimensional
Bessel-process. In this note, we present an alternative, elementary
proof of this fact.
\end{abstract}

\section{Introduction}
\label{s:introduction}

The study of 1d Brownian trajectories pushed up or down by Skorohod-reflection
on some other Borwnian trajectories (running backwards in time) was initiated
in \cite{soucaliuctothwerner} and motivated in \cite{tothwerner} by the
construction of the object what is today called the Brownian Web, see
\cite{fontesetal}. It turns out that these Brownian paths, reflected on one
another, have very interesting, sometimes surprising properties.
For further studies of Skorohod-reflection of Brownian paths on one
another see also \cite{soucaliucwerner}, \cite{burdzynualart},
\cite{warren} etc. In particular, in \cite{warren} Warren considers
two interlaced families of Brownian paths with paths belonging to the
second family reflected off paths belonging to the first (in Skorohod's
sense) and derives a determinantal formula for the distribution of
coalescing Brownian motions.

A particular case of Warren's formula is the following: fix a Brownian path
and let two other Brownian paths be pushed upwards and respectively
downwards by Skorohod-reflection on the trajectory of the first one.
The difference of the last two will be a three-dimensional Bessel-process.
In the present note, we give an alternative, elementary proof of this fact.

\subsection{Skorohod-reflection}
\label{ss:skorohod}

Let $T\in(0,\infty)$  and $b, x:[0,T)\to \R$ be continuous functions. Assume
$x(0)\ge b(0)$. The construction of the following proposition is due to
Skorohod. Its proof can be found either in \cite{revuzyor} (see Lemma 2.1 in
Chapter VI) or in \cite{soucaliuctothwerner} (see Lemma 2 in  Section 2.1)

\begin{proposition}
\label{prop:skorohod}
\begin{enumerate}[(1)]
\item There exists a unique continuous function $x_{b\ua}:[0,T)\to\R$ with
the following properties
\begin{enumerate}[--]
\item The function $x_{b\ua}-b$ is non-negative.

\item The function $x_{b\ua}-x$ is non-decreasing.

\item The function $x_{b\ua}-x$ increases only when  $x_{b\ua}=b$. That is
\[\int_0^T \ind\{x_{b\ua}(t)\not=b(t)\}\d(x_{b\ua}(t)-x(t))=0.\]
\end{enumerate}

\item The function $t\mapsto x_{b\ua}(t)$ is  given by the construction
\[x_{b\ua}(t)=x(t)+\sup_{0\le s\le t}\big(x(s)-b(s)\big)_-.\]

\item The map $C([0,T))\times C([0,T)) \ni
\big(b(\cdot),x(\cdot)\big)\mapsto\big(b(\cdot),x_{b\ua}(\cdot)\big) \in
C([0,T))\times C([0,T))$ is contunuous in supremum distance.
\end{enumerate}
\end{proposition}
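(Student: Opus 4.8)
The plan is to establish the claims in the order: formula (2) $\Rightarrow$ existence in (1) $\Rightarrow$ uniqueness in (1) $\Rightarrow$ continuity (3), since the explicit formula simultaneously furnishes a candidate solution and makes the continuity statement nearly automatic. So I would start by setting
\[
\ell(t):=\sup_{0\le s\le t}\big(x(s)-b(s)\big)_- ,\qquad x_{b\ua}:=x+\ell ,
\]
and checking the three bulleted properties of (1) directly for this function. Because $x(0)\ge b(0)$ we have $\ell(0)=0$, hence $x_{b\ua}(0)=x(0)$; non-negativity of $x_{b\ua}-b$ follows from $\ell(t)\ge\big(x(t)-b(t)\big)_-\ge b(t)-x(t)$; and $x_{b\ua}-x=\ell$ is non-decreasing since it is a running supremum. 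The only delicate point is the third (``reflection acts only on contact'') property.

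For that property the key observation is that the Lebesgue--Stieltjes measure $\d\ell$ is carried by the contact set of $\ell$ with $g:=(x-b)_-$. Indeed, if $\ell(t)>g(t)$ then, $g$ being continuous, $g<\ell(t)$ on a whole neighbourhood of $t$, so the running supremum $\ell$ is constant near $t$; thus $\d\ell$ does not charge $\{\ell>g\}$ and lives on $\{\ell=g\}$. I would then note that $\d\ell$-almost every $t$ in addition satisfies $x(t)\le b(t)$: at a point with $x(t)>b(t)$ one has $g(t)=0=\ell(t)$, whence $\ell\equiv 0$ near $t$ again, so such points are $\d\ell$-null. At any point with $\ell(t)=g(t)$ and $x(t)\le b(t)$ we get $g(t)=b(t)-x(t)$ and therefore $x_{b\ua}(t)=x(t)+\ell(t)=b(t)$; hence $\ind\{x_{b\ua}\ne b\}=0$ holds $\d\ell$-a.e., which is exactly the integral identity required.

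Uniqueness I would get from the standard energy estimate. If $f_1,f_2$ both satisfy (1) then each $f_i-x$ is continuous and non-decreasing, so $f_1-f_2=(f_1-x)-(f_2-x)$ is continuous of bounded variation, and the chain rule for such functions gives
\[
\big(f_1-f_2\big)^2(t)=\big(f_1-f_2\big)^2(0)+2\int_0^t\big(f_1-f_2\big)\,\d(f_1-x)-2\int_0^t\big(f_1-f_2\big)\,\d(f_2-x).
\]
Since $\d(f_i-x)$ is supported on $\{f_i=b\}$, on that set $f_1-f_2=b-f_2\le0$ (for $i=1$) and $f_1-f_2=f_1-b\ge0$ (for $i=2$) by the non-negativity property, so both integrals on the right are $\le0$. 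Together with $(f_i-x)(0)=0$ --- the normalization $x_{b\ua}(0)=x(0)$ implicit in the statement, which the third property makes forced --- this yields $\big(f_1-f_2\big)^2\le0$, i.e.\ $f_1\equiv f_2$; in particular every solution equals the function built above, which proves (2).

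Finally, continuity (3) is read off the formula: the map $a\mapsto(a)_-$ and the running-supremum map $h(\cdot)\mapsto\sup_{0\le s\le\,\cdot}h(s)$ are each $1$-Lipschitz for the supremum norm on every interval $[0,t]$, so
\[
\sup_{0\le s\le t}\big|x_{b\ua}(s)-\wt x_{\wt b\ua}(s)\big|\;\le\;2\sup_{0\le s\le t}|x(s)-\wt x(s)|+\sup_{0\le s\le t}|b(s)-\wt b(s)| ,
\]
and since the $b$-coordinate is unchanged, $(b,x)\mapsto(b,x_{b\ua})$ is (locally uniformly) continuous. I expect the genuine obstacle to be the third bulleted property --- that the reflecting term increases only on contact --- which requires the local-constancy argument for the running supremum together with the small case check; everything else is routine real analysis.
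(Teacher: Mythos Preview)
Your argument is correct and is the standard one. Note that the paper itself gives no proof of this proposition, referring instead to Revuz--Yor (Lemma~VI.2.1) and to Soucaliuc--T\'oth--Werner; your existence-via-the-explicit-formula followed by uniqueness-via-squaring is exactly the Revuz--Yor argument, and the Lipschitz estimate for~(3) is the usual one.

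One minor correction: the initial condition $(x_{b\ua}-x)(0)=0$ is not actually \emph{forced} by the third bullet as you suggest. For instance, if $b\equiv 0$ and $x\equiv 1$ on $[0,T)$, then $f\equiv 1+c$ satisfies all three bullets for every constant $c\ge -1$, so uniqueness in~(1) as literally written fails. The normalization is rather built into the formula of part~(2) and is part of the standard statement of Skorohod's lemma; the paper's phrasing of~(1) tacitly assumes it, and you should simply take it as a hypothesis rather than claim it is a consequence.
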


We call the function $t\mapsto x_{b\ua}(t)$ the \emph{upwards
Skorohod-reflection} of $x(\cdot)$ on $b(\cdot)$. As it is remarked in
\cite{soucaliuctothwerner}, the term \emph{Skorohod-pushup} of $x(\cdot)$ by
$b(\cdot)$ would be more adequate. Skorohod-reflection on paths $b(t)=$ const.\
plays a fundamental role in the proper formulation and proof of Tanaka's
formula, see Chapter VI of  \cite{revuzyor}.

The downwards Skorohod-reflection or Skorohod-pushdown is defined for
continuous functions $b, y:[0,T)\mapsto \R$ with $y(0)\le b(0)$ by
\[y_{b\da}:=-\left( (-y)_{(-b)\ua}\right),
\qquad y_{b\da}(t)=y(t)-\sup_{0\le s\le t}\big(y(s)-b(s)\big)_+.\]

Given three continuous trajectories $b,x,y:[0,T)\to\R$ with $y(0)\le b(0)\le
x(0)$, the map $C([0,T))\times C([0,T))\times C([0,T)) \ni \left(b(\cdot),
x(\cdot), y(\cdot)\right)\mapsto \left(b(\cdot), x_{b\ua}(\cdot),
y_{b\da}(\cdot)\right) \in C([0,T))\times C([0,T))\times C([0,T))$ is clearly
continuous in supremum distance.

\subsection{The result}
\label{ss:result}

Let $B(t)$, $X(t)$ and $Y(t)$ be independent standard 1d Brownian motions
starting from $0$ and define

\setlength{\arraycolsep}{.13889em}
\begin{eqnarray}
\label{pushX} X^+(t):=&X_{B\ua}(t), \qquad
\wh X(t)&:=\phantom{-}X^+(t)-B(t),\\[2pt]
\label{pushY} Y^-(t):=&Y_{B\da}(t), \qquad \wh Y(t)&:=-Y^-(t)+B(t).
\end{eqnarray}

We are interested in the difference process
\begin{equation} \label{diff}
Z(t):=X^+(t)-Y^-(t)=\wh X(t)+\wh Y(t).
\end{equation}
It is straightforward that $2^{-1/2}\wh X(t)$ and $2^{-1/2}\wh Y(t)$ are both
standard reflected Brownian motions. They are, of course, strongly dependent.

The following fact is a particular consequence of the main results in
\cite{warren}:

\begin{theorem*}
\label{thm:main} The process $2^{-1/2}Z(t)$ is $\BES^3$, that is standard 3d
Bessel-process.
\[\d Z(t)=2\frac1{Z(t)}\d t +\sqrt2\d W(t), \qquad Z(0)=0.\]
\end{theorem*}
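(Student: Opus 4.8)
The plan is to identify $Z(t)$ via its Doob--Meyer decomposition and then invoke a characterization of $\mathrm{BES}^3$. The key observation is that $\widehat X$ and $\widehat Y$ are reflected Brownian motions whose local times at $0$ are exactly the Skorohod-reflection terms, and these local times accumulate only at disjoint sets of times. I would first write, using Proposition~\ref{prop:skorohod}(2), $\widehat X(t) = X(t) - B(t) + L^X(t)$ where $L^X(t) = \sup_{0\le s\le t}(X(s)-B(s))_-$ is non-decreasing and increases only when $\widehat X(t)=0$; symmetrically $\widehat Y(t) = B(t) - Y(t) + L^Y(t)$ with $L^Y$ non-decreasing, increasing only when $\widehat Y(t)=0$. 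Hence
\[
Z(t) = \widehat X(t) + \widehat Y(t) = \big(X(t)-Y(t)\big) + L^X(t) + L^Y(t).
\]
Since $X-Y$ is a Brownian motion run at rate $2$, I would write $X(t)-Y(t)=\sqrt2\,W(t)$ for a standard Brownian motion $W$, so $Z(t) = \sqrt2\,W(t) + A(t)$ with $A := L^X + L^Y$ a continuous non-decreasing process, $A(0)=0$. This already exhibits $Z$ as a continuous semimartingale with martingale part $\sqrt2\,W$ (quadratic variation $2t$) and a non-decreasing finite-variation part; note $Z\ge 0$ because $\widehat X,\widehat Y\ge 0$, and $Z=0$ exactly when $\widehat X=\widehat Y=0$.

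Next I would pin down $A(t)$. The increments of $L^X$ charge only $\{t:\widehat X(t)=0\}$ and those of $L^Y$ only $\{t:\widehat Y(t)=0\}$; on the set where $Z>0$ at least one of $\widehat X,\widehat Y$ is strictly positive, so neither local time can both be charging there simultaneously, and in fact $dA$ is carried by $\{t:Z(t)=0\}$. By the occupation-times / Tanaka argument, $A(t)$ should be (a constant multiple of) the local time of $Z$ at $0$: applying Tanaka's formula to $|Z|=Z$ and comparing, or more directly using the theory of Brownian motion reflected at the boundary, one gets that the finite-variation part of a nonnegative semimartingale whose reflection term is carried by its zero set is its local time. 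Then the Engelbert--Schmidt / Pitman-type characterization says: a nonnegative continuous semimartingale $Z$ with $Z(0)=0$, martingale part a Brownian motion at rate $2$, and finite-variation part equal to the $0$-level local time of $Z$ normalized appropriately, is $\sqrt2\cdot\mathrm{BES}^3$. Equivalently, I would verify directly via It\^o's formula that $Z(t)^{-1}$ or a test function of $Z$ has the right semimartingale form, or simply check that $Z$ solves the SDE in the statement in the sense that $dZ = 2\,dt/Z + \sqrt2\,dW$ with the It\^o term arising precisely because $A(t) = \int_0^t 2\,Z(s)^{-1}\,ds$ — this last identity being the crux.

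The main obstacle is exactly this last identification: showing that the reflection term $A(t) = L^X(t)+L^Y(t)$ equals $\int_0^t 2\,Z(s)^{-1}\,ds$, i.e. that the \emph{singular} (local-time-like) pushing on the boundary is in fact \emph{absolutely continuous} with the Bessel drift density. A priori $A$ is just a continuous increasing process; the non-trivial input is that the joint dynamics of the two reflections conspire to smooth it out. I expect to handle this by a comparison of the \emph{two} decompositions of $Z$: one as above (with singular-looking $A$), and one obtained by noting that $Z$ is itself a reflected process (its own pushup), whose reflection term is the local time $\ell_Z$ of $Z$ at $0$; this forces $A(t) = \text{(function of)}\,\ell_Z(t)$, and then a dimension/self-similarity or excursion-theory argument (or a direct It\^o computation showing that $\ell_Z$ does not actually increase, so that the drift must be absolutely continuous) identifies the density as $2/Z$. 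Alternatively, and perhaps more in the spirit of an "elementary" proof, one can compute the law of $Z(t)$ for fixed $t$ directly from the Skorohod formulas via the reflection principle — $X^+(t)$ and $-Y^-(t)$ each have explicit densities involving a running maximum — and recognize the resulting density of $Z(t)/\sqrt2$ as the $\mathrm{BES}^3$ density $\sqrt{2/(\pi t^3)}\,r^2 e^{-r^2/(2t)}$, then upgrade from one-dimensional marginals to the process level using the Markov property and continuity from Proposition~\ref{prop:skorohod}(4); I would expect the authors' elementary route to be closer to this second line, and the hard step there is the exact evaluation of the joint distribution of the two coupled suprema.
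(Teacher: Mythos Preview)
Your approach is quite different from the paper's, which proceeds by discrete approximation: simple random walks $M$, $U$, $L$ on dual lattices are Skorohod-reflected, and the key lemma shows by an explicit induction that $D_n:=\tfrac12(U_{M\ua}(n)-L_{M\da}(n))$ is itself a Markov chain with transition matrix $\boP_{xy}=\tfrac{y}{x}\cdot\big(\tfrac12\ind\{y=x\}+\tfrac14\ind\{|y-x|=1\}\big)$, whose diffusive limit is $\BES^3$; Donsker plus the continuity of the Skorohod map finish the job. No stochastic calculus is used.

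Your semimartingale route contains a genuine error. The claim that $dA$ is carried by $\{Z=0\}$ is false: $dL^X$ charges $\{\wh X=0\}$ and $dL^Y$ charges $\{\wh Y=0\}$, so $dA=dL^X+dL^Y$ is carried by the \emph{union} $\{\wh X=0\}\cup\{\wh Y=0\}$, not by the intersection $\{Z=0\}=\{\wh X=0\}\cap\{\wh Y=0\}$. Relatedly, the characterization you invoke --- ``nonnegative semimartingale with Brownian martingale part and finite-variation part equal to its local time at $0$'' --- describes \emph{reflected} Brownian motion, not $\BES^3$; a $\BES^3$ started from $0$ never returns to $0$ and has zero local time there.

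The deeper obstruction is this: in the filtration $\cF^{B,X,Y}$, the process $A=L^X+L^Y$ is genuinely singular with respect to Lebesgue measure (each $L^{\cdot}$ is a local time, supported on the measure-zero zero set of a reflected Brownian motion), so the identity $A(t)=\int_0^t 2\,Z(s)^{-1}\,ds$ you aim for is simply false in that filtration. What is true is that $Z$ is $\BES^3$ \emph{in its own filtration} $\cF^Z\subsetneq\cF^{B,X,Y}$, and the Doob--Meyer decomposition changes under filtration shrinkage: the singular compensator in the large filtration becomes absolutely continuous after projecting to the small one. This is precisely the Pitman phenomenon ($2M-B$ is $\BES^3$, with $2M$ singular in $\cF^B$ but drift $\int R^{-1}\,ds$ in $\cF^R$), and carrying it out here amounts to proving that $Z$ is Markov in its own filtration and computing the projected drift --- which is essentially the theorem itself. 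The paper sidesteps all of this by establishing the Markov property at the discrete level, where it is a finite combinatorial computation.
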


\noindent
In the next section, we present an elementary proof of this fact.
\section{Proof}
\label{s:proof}

\subsection{Discrete Skorohod-reflection}
\label{ss:discrete}

Define the following square lattices inbedded in $\R\times\R$:
\begin{gather}\label{lattices}
\cL:=\{(t,x)\in\Z\times\Z: t+x\text{ is even}\}, \qquad
\cL^*:=\{(t,x)\in\Z\times\Z: t+x\text{ is odd}\}.
\end{gather}
In both of the lattices, the points $(t_1,x_1)$ and $(t_2,x_2)$ are connected
with an edge if and only if $|t_1-t_2|=|x_1-x_2|=1$. Note that $\cL$ and
$\cL^*$ are Whitney-duals of each other.

We define the discrete analogue of the Skorohod-reflection in $\cL$ and
$\cL^*$. Later on, we say that the function $y:[0,T]\cap\Z\to\Z$ is a
\emph{walk} in the lattice $\cL$ or $\cL^*$ if the consecutive elements of the
sequence $(0,y(0)),(1,y(1)),\dots,(T,y(T))$ are edges in $\cL$ or $\cL^*$.

Let $b:[0,T]\cap\Z\to\Z$ and $x:[0,T]\cap\Z\to\Z$ be two walks in the lattices
$\cL$ and $\cL^*$, respectively. Assume that $x(0)\ge b(0)$. An analogue of
Proposition 1 holds in this case, but the proof is even easier.

\begin{proposition}
\begin{enumerate}[(1)]
\item There is a unique walk $x_{b\ua}:[0,T]\cap\Z\to\Z$ in $\cL^*$ with the
following properties:
\begin{enumerate}[--]
\item The function $x_{b\ua}-b$ is non-negative.
\item The function $x_{b\ua}-x$ is non-decreasing.
\item The function $x_{b\ua}-x$ increases only when  $x_{b\ua}=b+1$, i.e.
\[\sum_{t=1}^T \ind\{x_{b\ua}(t)-b(t)>1\}
\left[\big(x_{b\ua}(t)-x(t))-(x_{b\ua}(t-1)-x(t-1)\big)\right]=0.\]
\end{enumerate}

\item The function $t\mapsto x_{b\ua}(t)$ can be expressed as
\[x_{b\ua}(t)=x(t)+\sup_{s\in[0,t]\cap\Z}(x(s)-b(s))_-+1.\]
\end{enumerate}
\end{proposition}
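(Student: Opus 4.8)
The plan is to prove the discrete analogue of Skorohod-reflection directly, mirroring the structure of Proposition~\ref{prop:skorohod}, but exploiting the fact that we are working with integer-valued walks so that "increasing" means increasing by steps of size $2$ and the reflection condition becomes a clean combinatorial statement. First I would establish part~(2) by taking the explicit formula
\[
\wt x(t):=x(t)+\sup_{s\in[0,t]\cap\Z}\big(x(s)-b(s)\big)_-+1
\]
as a candidate and verifying the three listed properties one at a time. The non-negativity of $\wt x - b$ is immediate from the definition of the running supremum of the negative part, together with the extra $+1$: indeed $\wt x(t)-b(t)=\big(x(t)-b(t)\big)+\sup_{s\le t}\big(x(s)-b(s)\big)_-+1\ge \big(x(t)-b(t)\big)+\big(x(t)-b(t)\big)_-+1 = \big(x(t)-b(t)\big)_+ + 1\ge 1>0$. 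Monotonicity of $\wt x-x=\sup_{s\le t}(x(s)-b(s))_-+1$ in $t$ is obvious since it is a running supremum. For the third property I would argue that the running supremum $\sup_{s\le t}(x(s)-b(s))_-$ strictly increases at time $t$ only if $(x(t)-b(t))_- > (x(s)-b(s))_-$ for all $s<t$, which forces $x(t)<b(t)$, and in that case $x(t)\le b(t)-1$ (integer steps, opposite-parity lattices give $x(t)\ne b(t)$ always, in fact), whence $\wt x(t)= x(t)+(b(t)-x(t))+1 = b(t)+1$. This yields $\wt x(t)=b(t)+1$ whenever the increment is nonzero, which is exactly the stated condition.

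Next I would prove uniqueness, i.e.\ part~(1). Suppose $u$ and $v$ are two walks in $\cL^*$ both satisfying the three bulleted properties, and consider $d:=u-v$. Since $u-x$ and $v-x$ are each non-decreasing, $d$ has bounded variation with $\d d = \d(u-x)-\d(v-x)$, and one can write the standard "integration by parts" estimate for $d(t)^2$: because $u-x$ increases only when $u=b+1$ and $v-x$ increases only when $v=b+1$, at any time where $u-x$ strictly increases we have $u=b+1\le v$ (using $v\ge b+1$, which in the integer setting follows from $v-b$ non-negative plus $v-b$ having the right parity so $v-b\ge 1$), hence $d\le 0$ there; symmetrically $d\ge 0$ wherever $v-x$ strictly increases. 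A discrete summation-by-parts argument then shows $d(t)^2\le d(0)^2 = 0$, so $u\equiv v$. (Here it is worth pinning down the parity remark early: since $b$ is an $\cL$-walk and $x_{b\ua}$ an $\cL^*$-walk, $x_{b\ua}(t)-b(t)$ is always odd, so "non-negative" upgrades automatically to "$\ge 1$", which is why the reflection happens at level $b+1$ rather than at $b$.)

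I expect the main obstacle to be bookkeeping the parity/step-size conventions cleanly rather than any genuine mathematical difficulty: one must be careful that the discrete "non-decreasing" function $x_{b\ua}-x$ jumps by $0$ or $2$ (never $1$), that the summation-by-parts identity is set up with the correct index ranges matching the sum displayed in the third bullet, and that the supremum in the formula is taken over $[0,t]\cap\Z$ inclusive. A secondary subtlety is checking that the candidate $\wt x$ is genuinely a \emph{walk} in $\cL^*$ — i.e.\ that consecutive values differ by exactly $\pm 1$ — which follows because $\wt x - x$ changes by $0$ or $2$ while $x$ itself changes by $\pm 1$, but one should verify the two cases (supremum unchanged vs.\ supremum increases) explicitly to see that the total increment is always $\pm1$. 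Once these conventions are fixed, both existence (via the formula) and uniqueness (via the squared-difference estimate) go through exactly as in the continuous case, only with sums replacing integrals, which is why the proof is "even easier."
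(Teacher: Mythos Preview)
The paper does not actually prove this proposition: it merely remarks that ``an analogue of Proposition~1 holds in this case, but the proof is even easier,'' and Proposition~1 itself is not proved there either but deferred to \cite{revuzyor} and \cite{soucaliuctothwerner}. There is therefore nothing in the paper to compare your argument against; your proposal \emph{is} the standard Skorohod existence-and-uniqueness argument transported to the lattice, which is precisely what the paper implicitly invokes.

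One genuine wrinkle worth flagging: your claim that $\wt x-x$ ``jumps by $0$ or $2$ (never $1$)'' is not literally true for the formula as printed. Since $x-b$ is always odd, $(x(s)-b(s))_-$ takes values in $\{0,1,3,5,\dots\}$, so its running supremum starts at $0$, jumps to $1$ the first time $x-b=-1$, and only thereafter increments by $2$; consequently $\wt x$ as written is not an $\cL^*$-walk at the first push (and already $\wt x(0)=x(0)+1$ has the wrong parity). This is a defect of the displayed formula rather than of your method --- the paper's own later normalisations $D_0=1$, $P_0=0$ force $U_{M\ua}(0)=U(0)$ and $L_{M\da}(0)=L(0)$, which is incompatible with the extra ``$+1$''. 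With the compensator taken as $\sup_{s\le t}\big(x(s)-b(s)-1\big)_-$ (equivalently: drop the ``$+1$'' and reflect off $b+1$), your parity bookkeeping and the remainder of the argument go through verbatim.
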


We call the function $t\mapsto x_{b\ua}(t)$ the \emph{discrete upwards
Skorohod-reflection} of $x(\cdot)$ on $b(\cdot)$. The discrete downwards
Skorohod-reflection is defined similarly. If $y:[0,T]\cap\Z\to\Z$ is a walk in
$\cL$ and $b:[0,T]\cap\Z\to\Z$ is a walk in $\cL^*$ with $y(0)\le b(0)$, then
\[y_{b\da}:=-\left((-y)_{(-b)\ua}\right),\qquad
y_{b\da}(t)=y(t)-\sup_{s\in[0,t]\cap\Z}(y(s)-b(s))_+-1.\]

In this paper, we use the same notation for the discrete Skorohod-reflection
and the continuous one (defined as Skorohod-reflection), but it will be always
clear from the context which is the adequate one.

\subsection{Approximation of reflected Brownian motions}

Let $M(t)$ be a random walk on the lattice $\cL$ with jumps from $(t,x)$ to
$(t+1,x+1)$ or $(t+1,x-1)$ with probability $1/2-1/2$ and $M(0)=0$. We define
the random walks $U(t)$ and $L(t)$ on $\cL^*$ with the same transition
probabilities, which are independent of each other and of $M(t)$. The initial
values are $U(0)=1$ and $L(0)=-1$. We extend our walks for non-integral values
of $t$ linearly, so the trajectories are continuous.

Since all these three random walks have steps with mean $0$ and variance $1$,
it follows that
\begin{equation}
\left(\frac{M(nt)}{\sqrt n},\frac{U(nt)}{\sqrt n},\frac{L(nt)}{\sqrt n}\right)
\stackrel{\mathrm d}{\Longrightarrow}(B(t),X(t),Y(t))\qquad(n\to\infty).
\end{equation}

We established earlier that the map
$(b(\cdot),x(\cdot),y(\cdot))\mapsto(b(\cdot),x_{b\ua}(\cdot),y_{b\da}(\cdot))$
is continuous in supremum distance. From Donsker's invariance principle (see
e.g.\ Section 7.6 of \cite{durrett}), we conclude that
\begin{equation}
\left(\frac{M(nt)}{\sqrt n},\frac{U_{M(n\cdot)\ua}(nt)}{\sqrt n},
\frac{L_{M(n\cdot)\da}(nt)}{\sqrt n}\right) \stackrel{\mathrm d}
{\Longrightarrow}(B(t),X^+(t),Y^-(t))
\end{equation}
in distribution as $n\to\infty$. Note that we can use the discrete
Skorohod-reflection to transform $U$ and $L$, because the difference is only
the addition of $1$, which vanishes in the limit. At this point, it suffices to
show that
\[2^{-1/2}\frac{U_{M(n\cdot)\ua}(nt)-L_{M(n\cdot)\da}(nt)}{\sqrt n}\]
converges to a $\BES^3$-process.

For $x,y\in\Z^+$, we define the stochastic matrix
\[
\boP_{xy}
=
\frac yx \cdot
\left\{
\begin{array}{c@{\extracolsep{1em}}l}
\frac12 & \text{if } y=x\\[2pt]
\frac14 & \text{if } |y-x|=1\\[2pt]
0 & \text{otherwise}
\end{array}\right..
\]

It is well known that if $X_n$ is a homogeneous Markov-chain with
transition probabilities ${\left(\boP_{xy}\right)}_{x,y\in\Z^+}$,
then its diffusive limit is $\BES^3$, i.e.\ for every $T>0$ the process
$\sqrt2(n^{-1/2}X_{nt})_{0\le t\le T}$ converges to a 3d Bessel-process
in the Skorohod-topology as $n\to\infty$.  So the proof of our theorem
relies on the following

\begin{lemma}
$\frac12(U_{M\ua}(t)-L_{M\da}(t))$ is a Markov-chain and its
transition matrix is $(\boP_{xy})_{x,y\in\Z^+}$, where $U_{M\ua}$ and
$L_{M\da}$ are discrete Skorohod-reflections.
\end{lemma}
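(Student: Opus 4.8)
The goal is to show that the process $V(t):=\frac12\big(U_{M\ua}(t)-L_{M\da}(t)\big)$ is a Markov chain on $\Z^+$ with the stated transition matrix $\boP$. I would set $W(t):=\frac12\big(U_{M\ua}(t)+L_{M\da}(t)\big)-M(t)$, so that $U_{M\ua}(t)=M(t)+W(t)+V(t)$ and $L_{M\da}(t)=M(t)+W(t)-V(t)$; here $W(t)\ge 0$ is (half) the sum of the two ``push'' amounts, while $V(t)\ge 0$ by construction since $U_{M\ua}\ge M\ge L_{M\da}$ at even lattice heights (up to the $\pm 1$ offsets). The first step is to understand, at each time $t\to t+1$, how the triple $(M,U_{M\ua},L_{M\da})$ updates. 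Between reflections, $U_{M\ua}$ and $L_{M\da}$ just copy the $\pm 1$ increments of $U$ and $L$; a correction of $+1$ (resp.\ $-1$) is added to $U_{M\ua}$ (resp.\ $L_{M\da}$) exactly when the copied step would bring it strictly below $M+1$ (resp.\ strictly above $M-1$). So from the three independent fair coin flips driving $M,U,L$ at step $t+1$, plus the current ``contact pattern'' of the reflected walks with $M$, one reads off the joint increment of $(V,W)$.

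The second and main step is to check that, conditionally on $V(t)=v$, the increment $V(t+1)-V(t)$ has the law prescribed by $\boP_{v,\cdot}$ \emph{regardless of the values of $W(t)$ and $M(t)$} — this is what makes $V$ alone Markovian. This requires a short case analysis: when $v\ge 2$ the reflected walks are both strictly above/below $M$, no pushing occurs, and $V$ changes by the centered sum of two independent $\pm 1$ steps, i.e.\ $V\mapsto v\pm 1$ each w.p.\ $1/4$ and $V\mapsto v$ w.p.\ $1/2$ — matching $\boP_{vy}$ for $v\ge 2$ once one verifies the factor $y/v$ is consistent (it is, because $\boP$ with that prefactor is exactly the $h$-transform of the symmetric walk by $h(x)=x$). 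When $v=1$, at least one of the two walks is in contact and a push can be triggered; the case analysis on the four equally likely $(U,L)$-flip combinations, together with the parity constraint forcing $M$'s position relative to the contact walks, yields $V\mapsto 2$ w.p.\ $1/2$ and $V\mapsto 1$ w.p.\ $1/2$ — and in particular the event $V\to 0$ has probability $0$, consistent with $\boP_{1,0}=0$. The key point I must be careful about is that the push at a boundary step is driven by the \emph{same} coin flip that would otherwise decrease $V$, so the down-move is ``reflected'' into a stay, which is precisely the mechanism producing the $\BES^3$-type drift.

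The third step is to promote the ``one-step increment law depends only on $V(t)$'' statement to the genuine Markov property: since the update rule is a deterministic function of the current configuration $(M(t),U_{M\ua}(t),L_{M\da}(t))$ and three fresh independent coin flips, the triple is a Markov chain; and because the conditional law of $V(t+1)$ given the whole triple at time $t$ in fact depends only on $V(t)$, a standard lumping/projection argument gives that $\big(V(t)\big)_{t\ge 0}$ is itself Markov with transition matrix $\boP$. I expect the main obstacle to be purely bookkeeping: tracking the $+1/-1$ offsets in the discrete Skorohod-reflection (Proposition 2) and the parity constraints on $\cL$ versus $\cL^*$ carefully enough that the boundary case $v=1$ comes out exactly as $\boP_{11}=\boP_{12}=1/2$ with no leakage to $0$; once the correct boundary behaviour is pinned down, the interior case and the lumping argument are routine.
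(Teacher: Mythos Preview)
Your plan has a genuine gap in the second step. The claim that ``when $v\ge 2$ the reflected walks are both strictly above/below $M$, no pushing occurs'' is false, and the conclusion you draw from it (transitions $v\mapsto v\pm 1$ each with probability $1/4$) is \emph{not} the row $\boP_{v,\cdot}$, which has $\boP_{v,v-1}=\tfrac{v-1}{4v}$ and $\boP_{v,v+1}=\tfrac{v+1}{4v}$. Concretely, write $A=U_{M\ua}-M$ and $B=M-L_{M\da}$; both are odd and $\ge 1$, and $V=\tfrac12(A+B)$. For $V=3$ one can have $(A,B)=(3,3)$ (no contact) or $(A,B)=(1,5)$ (upper walk in contact). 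In the first case the next step of $V$ is $2,3,4$ with probabilities $\tfrac14,\tfrac12,\tfrac14$; in the second case a push on the $U$-side is possible and the probabilities become $\tfrac18,\tfrac12,\tfrac38$. So the conditional law of $V(t+1)$ given the full current configuration does \emph{not} depend on $V(t)$ alone, and the lumping argument you propose cannot be applied as stated.

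What actually makes $V$ Markov with matrix $\boP$ is an extra invariant: conditionally on the whole history $V_0,\dots,V_n$, the position of $M$ inside the gap---equivalently $P_n:=\tfrac12(B-1)$---is \emph{uniform} on $\{0,1,\dots,V_n-1\}$. Averaging the configuration-dependent transition probabilities of $V$ over this uniform law is precisely what produces the asymmetric $\tfrac{v\pm1}{4v}$ weights (for $V=3$: $\tfrac13(\tfrac18+\tfrac14+\tfrac18)=\tfrac16$ and $\tfrac13(\tfrac38+\tfrac14+\tfrac38)=\tfrac13$). The paper proves this uniformity and the resulting Markov property simultaneously by induction on $n$. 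Your proposal is missing exactly this ingredient; without it the case analysis you outline cannot close, and the hand-wave about the $h$-transform does not repair it.
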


\subsection{Markov-property of the distance of the two reflected walks}

We introduce a different notation for the triple $(M,U_{M\ua},L_{M\da})$, which
is just a linear transformation. Let $K_n:=L_{M\da}(n)$ be the position of the
lower reflected walk. With the definition
$D_n:=\frac12(U_{M\ua}(n)-L_{M\da}(n))$, the distance of the two reflected
walks is $2D_n$. $P_n:=\frac12(M(n)-L_{M\da}(n)-1)$, which means that the
position of $M$ related to the lower walk is $2P_n+1$. The vector
$(K_n,D_n,P_n)$ is clearly a Markov-chain.

We are only interested in the coordinate $D_n$, which turns out to be also
Markov and to have transition matrix $(\boP_{xy})_{xy\in\Z^+}$. To show
this, we have to determine the conditional distribution of $P_n$, because
in certain cases it modifies the transition rules of $D_n$.

\begin{lemma}
The following identities hold
\begin{eqnarray}
&\label{id1}
\P\left(P_n=x\bigm|D_0^n\right)=
\frac1{D_n}\ind(x\in\{0,1,\dots,D_n-1\}),
\\[1ex]
&\label{id2}
\P\left(D_{n+1}=y\bigm|D_0^n\right)=\boP_{D_ny}
\end{eqnarray}
where $D_0^n$ means the sequence of variables $D_0,\dots,D_n$.
\end{lemma}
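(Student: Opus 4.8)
The plan is to prove (\ref{id1}) and (\ref{id2}) simultaneously by induction on $n$, exploiting the fact that already the pair $(D_n,P_n)$ --- not just the triple $(K_n,D_n,P_n)$ --- is a Markov chain, since its one-step law does not involve the position $K_n$. Put $\xi_M:=M(n+1)-M(n)$ and let $\xi_U,\xi_L$ be the increments of the \emph{free} walks $U,L$ over $[n,n+1]$; these three variables are i.i.d.\ uniform on $\{-1,+1\}$ and, being independent of the past of $(M,U,L)$, are independent of $(D_0,\dots,D_n,P_n)$. It is convenient to pass to the two gaps
\[
g^L_n:=M(n)-L_{M\da}(n)=2P_n+1,\qquad g^U_n:=U_{M\ua}(n)-M(n)=2D_n-2P_n-1,
\]
odd positive integers with $g^L_n+g^U_n=2D_n$. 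From the defining properties of the discrete Skorohod-reflection one reads off the update rules
\[
g^L_{n+1}=\max\{g^L_n+\xi_M-\xi_L,\,1\},\qquad g^U_{n+1}=\max\{g^U_n-\xi_M+\xi_U,\,1\},
\]
whence $(D_{n+1},P_{n+1})$ is a deterministic function of $(D_n,P_n)$ and $(\xi_M,\xi_U,\xi_L)$ only, which is precisely why $(D_n,P_n)$ is Markov.

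Assume now that (\ref{id1}) holds at level $n$: conditionally on $D_0^n$ with $D_n=d$, the variable $P_n$ is uniform on $\{0,\dots,d-1\}$. By the independence just noted, conditionally on $D_0^n$ the quadruple $(P_n,\xi_M,\xi_U,\xi_L)$ is then uniform over the $8d$-element set $\{0,\dots,d-1\}\times\{-1,+1\}^3$, and the whole matter reduces to the
\begin{quote}
\emph{Claim.} Among those $8d$ equally likely configurations, for every $p'\in\{0,\dots,d\}$ exactly two yield $(D_{n+1},P_{n+1})=(d+1,p')$; for every $p'\in\{0,\dots,d-1\}$ exactly four yield $(d,p')$; and for every $p'\in\{0,\dots,d-2\}$ exactly two yield $(d-1,p')$ (the last family being empty when $d=1$).
\end{quote}
Given the Claim, summing over $p'$ yields $\condP{D_{n+1}=d+1}{D_0^n}=\tfrac{2(d+1)}{8d}=\tfrac{d+1}{4d}=\boP_{d,d+1}$ and similarly $\condP{D_{n+1}=d}{D_0^n}=\boP_{d,d}$ and $\condP{D_{n+1}=d-1}{D_0^n}=\boP_{d,d-1}$, which is (\ref{id2}) at level $n$; moreover, for each attainable value $d'$ of $D_{n+1}$ the Claim forces the conditional law of $P_{n+1}$ given $(D_0^n,D_{n+1}=d')=D_0^{n+1}$ to be uniform on $\{0,\dots,d'-1\}$, which is (\ref{id1}) at level $n+1$. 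Since $U_{M\ua}(0)=1$ and $L_{M\da}(0)=-1$ give $D_0=1$, $P_0=0$, identity (\ref{id1}) holds trivially at $n=0$, and the induction closes.

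It remains to prove the Claim, and this is the only step that calls for care. Split $\{0,\dots,d-1\}$ into the ``bulk'' $\{1,\dots,d-2\}$, where $g^L_n,g^U_n\ge3$ so that the maxima in the update rules are never active, and the two endpoints $0$ and $d-1$. For $p$ in the bulk the update rules give the shift $\big(D_{n+1}-D_n,\,P_{n+1}-P_n\big)=\big(\tfrac12(\xi_U-\xi_L),\,\tfrac12(\xi_M-\xi_L)\big)$, and running through the eight sign patterns one checks that this shift is $(0,0)$ for two of them and each of $(\pm1,0)$, $(0,\pm1)$, $(1,1)$, $(-1,-1)$ for exactly one; hence from a bulk point $p$ the pair $(D_{n+1},P_{n+1})$ hits $(d,p)$ twice and each of $(d\pm1,p)$, $(d,p\pm1)$, $(d+1,p+1)$, $(d-1,p-1)$ once. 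A short count then shows that every target $(d',p')$ with $p'$ at distance $\ge2$ from both edges already gets the multiplicity asserted in the Claim from bulk points alone; the only possible deficits occur at $p'\in\{0,1,d-1,d\}$, and these are made up \emph{exactly} by the transitions out of the two endpoints, where the maxima in the update rules re-route precisely the mass that would otherwise leave the admissible region $g^L,g^U\ge1$ (for instance, at $p=0$ the patterns $(\xi_M,\xi_U,\xi_L)=(-1,+1,+1)$ and $(-1,+1,-1)$ both land on $(d+1,0)$, supplying the two units of $(d+1,0)$ that no bulk point can reach). Verifying this is a finite enumeration of the eight sign patterns at each of $p=0$ and $p=d-1$, the degenerate case $d=1$ --- both gaps equal to $1$ --- being treated the same way; I expect this boundary bookkeeping to be the only genuinely fiddly part of the proof.
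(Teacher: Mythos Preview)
Your proposal is correct, and the overall architecture --- simultaneous induction on $n$ for \eqref{id1} and \eqref{id2}, with the uniform law of $P_n$ feeding the transition of $D_n$ --- is exactly the paper's. The execution, however, is genuinely different. The paper works directly on the conditional probability $\condP{P_n=x}{D_n=y,D_0^{n-1}}$, expands it via Bayes-type identities, and plugs in both inductive hypotheses; it classifies transitions by ``type $A$/$B$'' events and spells out only the case $D_n=D_{n-1}-1$, leaving the others as similar. You instead pass to the two gap variables $g^L,g^U$ with the clean update rules $g^\bullet_{n+1}=\max\{g^\bullet_n\pm(\xi_M-\xi_\bullet),1\}$, observe that $(D_n,P_n)$ alone is Markov, and reduce the whole induction step to a single combinatorial Claim about the $8d$ equiprobable configurations $(p,\xi_M,\xi_U,\xi_L)$. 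Once the Claim is checked, both \eqref{id2} at level $n$ and \eqref{id1} at level $n+1$ drop out simultaneously; in particular your induction needs only \eqref{id1} as hypothesis, whereas the paper invokes both. What this buys you is transparency: the uniformity of $P_{n+1}$ is visible as ``equal multiplicities'' rather than emerging from a ratio of conditional probabilities. The price is the boundary enumeration at $p\in\{0,d-1\}$ and $d=1$, which you flag but do not write out; it does go through (the two truncated sign patterns at each edge redirect exactly the mass needed), but since this is the crux of the argument you should record at least the $d=1$ table and one edge case explicitly rather than leaving it as ``I expect\ldots''.
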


\begin{proof}
The two identities \eqref{id1}, respectively, \eqref{id2}
of the lemma are proved by a common induction on
$n$. Since $D_0=1$ and $P_0=0$, the case $n=0$ is trivial.

For the induction step, we have to enumerate the possible transitions of the
Markov-chain $(K_n,D_n,P_n)$. For the sake of simplicity, we only prove for
$D_n=D_{n-1}-1$, the other cases are similar. It is easy to check that the
transition $(k,d,p)\to(k+1,d-1,p)$ has probability
$\frac18\ind(p\in\{0,1,\dots,d-2\})$, this will be called type $A$ events. Type
$B$ events are the transitions $(k,d,p)\to(k+1,d-1,p-1)$, which happen with
probability $\frac18\ind(p\in\{1,2,\dots,d-1\})$. No other cases give $d\to
d-1$.

\noindent{\sl Proof of \eqref{id1}}:
Let $x,y\in\Z^+$. We suppose that $y=D_{n-1}-1$.
\begin{eqnarray}
&&
\hskip-15mm
\condP{P_n=x}{D_n=y,D_0^n}=
\\[2pt]
\notag
&=&
\sum_{z\in\Z}\condP{P_n=x}{P_{n-1}=z,D_n=y,D_0^{n-1}}
\condP{P_{n-1}=z}{D_n=y,D_0^{n-1}}
\\[2pt]
\notag
&=&
\sum_{z\in\Z}\frac{\condP{P_n=x,D_n=y}{P_{n-1}=z,D_0^{n-1}}}
{\condP{D_n=y}{P_{n-1}=z,D_0^{n-1}}}\;
\condP{P_{n-1}=z}{D_n=y,D_0^{n-1}}
\\[2pt]
\notag
&=&
\sum_{z=x}^{x+1}\condP{P_n=x,D_n=y}{P_{n-1}=z,D_0^{n-1}}
\frac{\condP{P_{n-1}=z}{D_0^{n-1}}}{\condP{D_n=y}{D_0^{n-1}}}
\\[2pt]
\notag
&=&
\condP{P_n=x,D_n=y}{P_{n-1}=x,D_0^{n-1}}
\frac{\condP{P_{n-1}=x}{D_0^{n-1}}}{\condP{D_n=y}{D_0^{n-1}}}
\\[2pt]
\notag
&&
+\condP{P_n=x,D_n=y}{P_{n-1}=x+1,D_0^{n-1}}
\frac{\condP{P_{n-1}=x+1}{D_0^{n-1}}}{\condP{D_n=y}{D_0^{n-1}}}
\\[2pt]
\notag
&=&
\frac18\ind(x\in\{0,1,\dots,D_{n-1}-2\})
\frac{\frac1{D_{n-1}}\ind(x\in\{0,1,\dots,D_{n-1}-1\})}{\frac14\frac{D_{n-1}-1}{D_{n-1}}}
\\[2pt]
\notag
&&
+\frac18\ind(x\in\{0,1,\dots,D_{n-1}-2\})
\frac{\frac1{D_{n-1}}\ind(x\in\{-1,0,\dots,D_{n-1}-2\})}{\frac14\frac{D_{n-1}-1}{D_{n-1}}}
\\[2pt]
\notag
&=&
\frac1{D_{n-1}-1}\ind(x\in\{0,\dots,D_{n-1}-2\})=\frac1y\ind(x\in\{0,1,\dots,y-1\}).
\end{eqnarray}
First, we used the law of total probability and the definition of conditional
probability and the identity $\P(E|F)/\P(F|E)=\P(E)/\P(F)$ on a conditional
probability space. As remarked at the beginning of this proof, there are only
two cases to reduce the value of $D$, so the sum has only two terms. Then, we
used both inductional hypotheses to evaluate the conditional probabilities. The
remaining steps are obvious.\\

\noindent{\sl Proof of \eqref{id2}}:
We spell out the proof for $D_{n+1}=D_n-1$, the cases $D_{n+1}=D_n$ and
$D_{n+1}=D_n+1$ are similar.
\begin{eqnarray}
&&
\hskip-15mm
\condP{D_{n+1}=D_n-1}{D_0^n}=
\\[2pt]
\notag
&=&
\sum_{x=0}^{D_n-1}\condP{D_{n+1}=D_n-1}{P_n=x,D_0^n}\condP{P_n=x}{D_0^n}
\\[2pt]
\notag
&=&
\sum_{x=0}^{D_n-1}\left(\frac18\ind(x\in\{0,1,\dots,D_n-2\})
+\frac18\ind(x\in\{1,2,\dots,D_n-1\})\right)\frac1{D_n}
\\[2pt]
\notag
&=&
\frac14\frac{D_n-1}{D_n}=\boP_{D_n(D_n-1)}.
\end{eqnarray}
In the second step, only type $A$ and $B$ events can cause the transition
$D_{n+1}=D_n-1$. We applied the first part of this lemma to evaluate the second
conditional probability factor.

\end{proof}

As a consequence, we see that the distribution of $D_{n+1}$ conditioned on
$D_0^n$ depends only on $D_n$, which means that $D_n$ is a Markov-chain with
transition matrix $(\boP_{xy})_{xy}$. From this, the assertion of the theorem
follows.\\

\vfill

\noindent{\bf Acknowledgement:}
The result was originally proved without knowledge of Jon Warren's work.
We thank Wendelin Werner for drawing our attention to the existence of
the paper \cite{warren}. The authors' research is partially supported by the
OTKA (Hungarian National Research Fund) grants K 60708 (for B.T. and
B.V.) and TS 49835 (for B.V.).

\vfill

\hbox{ \phantom{M} \hskip7cm
\vbox{\hsize8cm {\noindent Address of authors:\\
{\sc Institute of Mathematics\\
Technical University of Budapest\\
Egry J\'ozsef u.\ 1\\
H-1111 Budapest, Hungary}\\[5pt]
e-mail:{\tt \{balint,vetob\}{@}math.bme.hu} }}}
\end{document}